%
%
%

\documentclass{amsart}
\usepackage{amsmath, amsthm, amssymb}
\usepackage{amssymb}
\usepackage{amsfonts}
\usepackage{amscd}
\usepackage{eufrak}
\usepackage{mathrsfs}
\usepackage{latexsym}
\usepackage{amsmath}
\usepackage{txfonts}

\usepackage{tikz}
\usepackage{graphicx}
\usepackage{subfig}

\usepackage[all, cmtip]{xy}

\newtheorem{theorem}{Theorem}[section]
\newtheorem{lemma}[theorem]{Lemma}

\newtheorem{proposition}[theorem]{Proposition}

\theoremstyle{definition}

\theoremstyle{remark}
\newtheorem{remark}[theorem]{Remark}

\newcommand{\supp}[1]{\textrm{supp}(#1)}

\newcommand{\spec}[1]{\textrm{spec}\,#1}
%

\newcommand{\sshf}[1]{\mathcal{O}_{#1}}
\newcommand{\shf}[1]{\mathcal{#1}}

\newcommand{\iso}{\simeq}
\newcommand{\ses}[3]{0\rightarrow#1\rightarrow#2\rightarrow#3\rightarrow{0}}

\newcommand{\embedding}{\hookrightarrow}

\newcommand{\is}[1]{\mathscr{I}_{#1}}

\newcommand{\paren}[1]{\left(#1\right)}

\newcommand{\Hilb}[2]{\text{Hilb}^{#1}(#2)}
\newcommand{\soc}[1]{\text{Soc}\paren{#1}}


\numberwithin{equation}{section}



\begin{document}
\allowdisplaybreaks
\title{On the universal family of Hilbert schemes of points on a surface}

\author{Lei Song}

\address{Department of Mathematics and Statistics,
University of Illinois at Chicago, Chicago, IL 60607}
\curraddr{Department of Mathematics, University of Kansas, Lawrence, KS 66045}
\email{lsong@ku.edu}




\dedicatory{}

\keywords{Hilbert scheme of points on a surface, universal family, rational singularities, Samuel multiplicity}

\begin{abstract}
For a smooth quasi-projective surface $X$ and an integer $n\ge 3$, we show that the universal family $Z^n$ over the Hilbert scheme $\Hilb{n}{X}$ of $n$ points has non $\mathbb{Q}$-Gorenstein, rational singularities, and that the Samuel multiplicity $\mu$ at a closed point on $Z^n$ can be computed in terms of the dimension of the socle. We also show that $\mu\le n$.
\end{abstract}

\maketitle

\section{Introduction}

Let $X$ be a smooth quasi-projective surface over an algebraically closed field $k$ of characteristic $0$. Let $\Hilb{n}{X}$ denote the Hilbert scheme of zero dimensional closed subschemes of $X$ of length $n$. Fogarty's fundamental result \cite{Fogarty68} claims that $\Hilb{n}{X}$ is a smooth, irreducible variety of dimension $2n$. When $X$ is complete, one may consider $\Hilb{n}{X}$ as a natural compactification of the space of $n$ unlabeled distinct points on $X$. For this reason, $\Hilb{n}{X}$ is a quite useful tool for dealing with infinitely near points on $X$.

The universal family $Z^n\subset\Hilb{n}{X}\times X$, with the induced projection $\pi: Z^n\rightarrow\Hilb{n}{X}$, is a finite flat cover of degree $n$ over $\Hilb{n}{X}$. The importance of $Z^n$ lies in its role in inductive approaches for the study of Hilbert schemes, e.g.~in deduction of the Picard scheme of $\Hilb{n}{X}$ \cite{Fogarty73}, and in calculation of the Nakajima constants \cite{ES}. While $Z^2$ is simply the blow up of $X\times X$ along the diagonal,  $Z^n$ is complicated in general. It is well known that $Z^n$ is irreducible, singular (except $n$=2), and Cohen-Macaulay. By \cite{Fogarty73}, $Z^n$ is also normal with $R_3$ condition. More precisely, we prove in this note

\begin{theorem}\label{theorem 1}
The universal family $Z^n$ is non $\mathbb{Q}$-Gorenstein, and has rational singularities. For a closed point $\zeta=(\xi, p)\in Z^n$, the Samuel multiplicity $\mu= {b_2+1\choose 2}$, where $b_2=b_2(\sshf{\xi, p})$ is the dimension of the socle of $\sshf{\xi, p}$.
\end{theorem}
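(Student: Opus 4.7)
The plan is to work \'etale-locally at $\zeta=(\xi,p)$. Using the product decomposition of $\Hilb{n}{X}$ indexed by $\supp{\xi}$ together with the corresponding decomposition of $Z^n$, I may assume $\xi$ is supported only at $p$, so $\xi$ is defined by an $\mathfrak{m}_p$-primary ideal $I\subset R:=\widehat{\sshf{X,p}}$ of colength $n$ with $\dim_k\soc{R/I}=b_2$. By Hilbert--Burch applied to the Artinian quotient of the two-dimensional regular local ring $R$, the minimal number of generators of $I$ is $\nu(I)=b_2+1$, so the universal ideal is locally cut out by $b_2+1$ equations $F_1,\ldots,F_{b_2+1}$ in $\Hilb{n}{X}\times X$ near $(\xi,p)$.

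\textbf{Rational singularities.} I would identify $Z^n$ with the quotient $X_n/S_{n-1}$, where $X_n\subset X^n\times\Hilb{n}{X}$ is Haiman's isospectral Hilbert scheme and $S_{n-1}\subset S_n$ is the stabilizer of the first coordinate. Both $X_n/S_{n-1}$ and $Z^n$ are normal (the quotient because $X_n$ is normal, $Z^n$ by Fogarty), finite of degree $n$ over $\Hilb{n}{X}$, and canonically agree over the open locus of reduced $\xi$; hence they are isomorphic by Zariski's main theorem. Haiman's theorem that $X_n$ is Cohen--Macaulay, Gorenstein, and has rational singularities, combined with Boutot's theorem for finite-group quotients in characteristic zero, then yields rational singularities on $Z^n$.

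\textbf{Samuel multiplicity.} Let $A:=\widehat{\sshf{Z^n,\zeta}}$, a Cohen--Macaulay local ring of dimension $2n$. The pulled-back system of parameters $(t_1,\ldots,t_{2n})$ from $\Hilb{n}{X}$ satisfies $\ell(A/(t))=\ell(R/I)=n$, already giving the bound $\mu\le n$. For the exact value $\mu=\binom{b_2+1}{2}$, I would analyze the tangent cone at $\zeta$: the initial forms of $F_1,\ldots,F_{b_2+1}$ in the associated graded ring $\mathrm{gr}_{\mathfrak{m}_A}A$ can be described in terms of the Hilbert--Burch matrix of $I$, and the $\binom{b_2+1}{2}$ quadratic syzygies among the generators determine the degree of the tangent cone. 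Cohen--Macaulayness of $A$ then yields $\mu=\binom{b_2+1}{2}$ as the length of $A$ modulo a minimal reduction of $\mathfrak{m}_A$.

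\textbf{Non $\mathbb{Q}$-Gorenstein and main obstacle.} Since $\pi$ is finite flat over the Gorenstein base $\Hilb{n}{X}$, $Z^n$ is Gorenstein iff each scheme-theoretic fiber $\xi$ is Gorenstein; for $n\ge 3$ and $\xi=V(\mathfrak{m}_p^2)$, the fiber has socle dimension $2$ and fails to be Gorenstein, so $Z^n$ is not Gorenstein. To upgrade this to non $\mathbb{Q}$-Gorenstein, I would use the smooth resolution $\Hilb{n-1,n}{X}\to Z^n$ (smoothness by Cheah) and argue via discrepancies, or via a local divisor class group computation, that no multiple $mK_{Z^n}$ is Cartier at such a $\zeta$. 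The main anticipated obstacle is the tangent cone computation at an arbitrary $\zeta$, which demands precise control over the initial forms of the universal ideal through the Hilbert--Burch matrix; the non $\mathbb{Q}$-Gorenstein claim is a secondary but delicate step requiring careful bookkeeping on the resolution.
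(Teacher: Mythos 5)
Your proposal leaves two of the three assertions essentially unproved, and the one part it does argue takes a much heavier route than the paper. For rational singularities you propose $Z^n\iso X_n/S_{n-1}$ (Haiman's isospectral Hilbert scheme) plus Boutot's theorem. The identification itself is plausible (both are normal, finite over $\Hilb{n}{X}$, and agree over the reduced locus), but the input you actually need is that $X_n$ has rational singularities; Haiman's $n!$ theorem gives normal, Cohen--Macaulay and Gorenstein, none of which implies rational singularities, so this step requires a further citation or argument. Even granting it, you are invoking a far deeper theorem than necessary: the paper works directly with the resolution $\psi:\Hilb{n, n-1}{X}\rightarrow Z^n$, shows that $\mathbb{P}(\omega_{Z^n,\zeta})$ is a local complete intersection in $\mathbb{P}^{b_2-1}_R$ cut out by a section of $\sshf{\mathbb{P}}(1)^{\oplus b_1}$ (using that the minimal free resolution of $\sshf{Z^n,\zeta}$ restricts to that of $\sshf{\xi,p}$), and kills $R^j\rho_*$ with the twisted Koszul complex. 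On the positive side, your reduction to the punctual case is fine, and your observation that $\mu\le e((t))=\ell(A/(t))\le n$ for the pulled-back system of parameters is a correct and clean alternative derivation of the bound $\mu\le n$.

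The two genuine gaps are these. First, the exact value $\mu=\binom{b_2+1}{2}$: ``analyze the tangent cone via the Hilbert--Burch matrix'' is not an argument. Computing the associated graded ring of $\sshf{Z^n,\zeta}$ is precisely the hard point, and the claim that the $\binom{b_2+1}{2}$ quadratic syzygies ``determine the degree of the tangent cone'' is unsubstantiated; note the paper explicitly warns that $\mu\neq\dim_k\sshf{\xi,p}$ in general, so no naive length count can settle this. The paper instead computes $\mu=s_0(\{\zeta\}, Z^n)$ using birational invariance of Segre classes under $\psi$, reducing to $c_{b_2-1}$ of the normal bundle of $\psi^{-1}(\zeta)\iso\mathbb{P}^{b_2-1}$, which is identified with $\sshf{\mathbb{P}^{b_2-1}}(1)^{\oplus b_1}$ modulo a trivial summand; you would need to supply something of comparable substance. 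Second, non $\mathbb{Q}$-Gorenstein: you correctly observe that the fibrewise argument only gives non-Gorenstein and that this is insufficient, but you then offer only ``argue via discrepancies or a divisor class group computation.'' The actual input is Fogarty's theorem that the two Weil divisors $E^n_1, E^n_2$ on $Z^n$ lying over the branch divisor $B^n$ are not $\mathbb{Q}$-Cartier; combined with Riemann--Hurwitz and the ramification indices $r_1=2$, $r_2=1$, one gets $K_{Z^n}+(2c-1)E^n_1+cE^n_2=\pi^*(K_{\Hilb{n}{X}}+cB^n)$ for all $c\in\mathbb{Q}$, so if $K_{Z^n}$ were $\mathbb{Q}$-Cartier one of the $E^n_i$ would be as well. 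Without identifying this (or an equivalent) input, that claim remains unproved.
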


We note that $b_2+1$ equals the minimal number of generators of the ideal $\is{\xi, p}\subset\sshf{X, p}$; see Lemma \ref{b_2}.

For $i>0$, let $V^i=\left\{(\xi, p)\in Z^n \;|\; b_2(\sshf{\xi, p})=i\;\right\}$. The locally closed subsets $V^i$ form a stratification of $Z^n$. A proposition due to Ellingsrud and Lehn \cite{EllingLehn} says that $\text{codim}(V^i, \Hilb{n}{X}\times X)\ge 2i$. This kind of codimension estimate is an ingredient in proving the irreducibility of $\Hilb{n}{X}$ and more generally, that of certain quot schemes. An immediate consequence of their estimate is that $b_2\le n+1$. In fact, we have

\begin{theorem}\label{bound}
\begin{equation*}
  b_2\le\left\lfloor\frac{\sqrt{1+8n}-1}{2}\right\rfloor,
\end{equation*}
and the bound is optimal. Consequently, one has $\mu\le n$.
\end{theorem}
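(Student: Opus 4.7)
The plan is to bound $b_2 + 1 = \mu(\is{\xi,p})$, the minimal number of generators of the stalk ideal (via Lemma \ref{b_2}), by appealing to the Hilbert--Burch structure theorem for perfect ideals of grade two in the two-dimensional regular local ring $A := \sshf{X,p}$.

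Set $I = \is{\xi,p}$, let $\mathfrak{m}$ denote the maximal ideal of $A$, and write $r = \mu(I) = b_2 + 1$. Since $A/I = \sshf{\xi,p}$ has finite length $\ell \le n$, the ideal $I$ is perfect of grade $2$, and Hilbert--Burch supplies a minimal free resolution
\begin{equation*}
0 \to A^{r-1} \xrightarrow{M} A^r \to A \to A/I \to 0
\end{equation*}
with $I = c\cdot I_{r-1}(M)$ for some $c \in A$ and an $r\times(r-1)$ matrix $M$. A quick height check rules out $c$ being a non-unit: otherwise $I$ would lie in the height-one principal ideal $(c)$, contradicting $\operatorname{ht}(I)=2$. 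Thus $I = I_{r-1}(M)$.

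The key observation is that minimality of the resolution forces every entry of $M$ into $\mathfrak{m}$. Consequently each $(r-1)\times(r-1)$ minor is a sum of products of $r-1$ elements of $\mathfrak{m}$, so lies in $\mathfrak{m}^{r-1}$. This yields the containment $I \subset \mathfrak{m}^{r-1}$, and hence
\begin{equation*}
n \;\ge\; \ell(A/I) \;\ge\; \ell(A/\mathfrak{m}^{r-1}) \;=\; \binom{r}{2} \;=\; \frac{b_2(b_2+1)}{2}.
\end{equation*}
Solving $b_2^2 + b_2 \le 2n$ delivers the floor bound, and $\mu \le n$ follows immediately from Theorem \ref{theorem 1} since $\mu = \binom{b_2+1}{2}$ coincides with the rightmost expression above.

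Optimality is witnessed by the punctual subscheme $\xi = V(\mathfrak{m}^k)$ supported at $p$: here $n = \binom{k+1}{2}$, $\soc{A/\mathfrak{m}^k} = \mathfrak{m}^{k-1}/\mathfrak{m}^k$ has dimension $k$, and the floor bound specializes to exactly $k$. Once Hilbert--Burch is invoked the argument is essentially formal; the only point requiring a hair of care is the containment $I \subset \mathfrak{m}^{r-1}$, which is what converts the combinatorial datum ``$r$ minimal generators'' into a geometric lower bound on the colength.
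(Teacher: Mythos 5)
Your argument is correct, and it takes a genuinely different route from the paper. The paper reduces to $X=\mathbb{A}^2$ via completion, Gr\"obner--degenerates $\is{\xi,p}$ to a monomial ideal using upper semi-continuity of $b_2$, and then reads off the inequality $\binom{b_2+1}{2}\le n$ combinatorially from the staircase diagram (boxes versus inner corners). You instead argue directly from the minimal free resolution: since the resolution is minimal, the entries of the $b_1\times b_2$ presentation matrix lie in $\mathfrak{m}$, so its maximal minors --- which by Hilbert--Burch generate $\is{\xi,p}$ --- lie in $\mathfrak{m}^{b_2}$, whence $\dim_k\sshf{\xi,p}\ge \dim_k\paren{\sshf{X,p}/\mathfrak{m}^{b_2}}=\binom{b_2+1}{2}$. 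This is cleaner in that it needs no completion step, no passage to initial ideals, and no semicontinuity; it is essentially a two-line proof of Haiman's inequality $\dim_k\sshf{\xi,p}\ge\binom{b_2+1}{2}$ that the introduction cites, and the unit-versus-non-unit check on the Hilbert--Burch factor $c$ is handled correctly (indeed the paper's Section 2 already states the resolution with entries in $\mathfrak{m}_p$ and that the $b_2\times b_2$ minors generate the ideal, so you could quote that directly). What the paper's monomial approach buys in exchange is a transparent picture of \emph{which} ideals achieve equality. One small incompleteness on your side: your optimality witnesses $V(\mathfrak{m}^k)$ only realize triangular values $n=\binom{k+1}{2}$, whereas optimality should be checked for every $n$; this is fixed either by the paper's staircase construction (lengthen one row of the staircase of $\mathfrak{m}^k$ without creating new corners) or, even more cheaply, by adjoining $n-\binom{k+1}{2}$ reduced points away from $p$, which changes neither $b_2(\sshf{\xi,p})$ nor the fact that $(\xi,p)\in Z^n$.
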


Haiman \cite[Prop 3.5.3]{Haiman01} proves a similar result on multiplicity, namely $\dim_k\sshf{\xi, p}\ge{b_2+1\choose 2}$, which implies that $\mu\le n$. We need to point out that $\mu\neq\dim_k\sshf{\xi, p}$ in general.

{\it Acknowledgments}: I am grateful to Lawrence Ein and Steven Sam for valuable discussions.

\section{Preliminaries}
For a point $p\in X$, $\frak{m}_p$ denotes the maximal ideal of the local ring $\sshf{X, p}$ and $k(p)$ denotes the residue field. Let $\xi\subset X$ be a zero dimensional closed subscheme with the defining ideal $\is{\xi}$. The socle $\soc{\sshf{\xi, p}}$ is defined to be $\text{Hom}_{\sshf{X, p}}(k(p), \sshf{\xi, p})$. We denote the minimal number of generators of $\is{\xi, p}$ by $e(\is{\xi, p})$. By Nakayama lemma $e(\is{\xi, p})=\dim_{k(p)}{\is{\xi, p}\otimes k(p)}$.

Since $X$ is smooth and of dimension two, $\sshf{\xi, p}$ has a minimal free resolution
\begin{equation}\label{minimal resolution}
 0\rightarrow{\overset{b_2}{\oplus}\sshf{X, p}}\xrightarrow{\varphi}{\overset{b_1}\oplus \sshf{X, p}}\rightarrow{\sshf{X, p}}\rightarrow\sshf{\xi, p}\rightarrow 0,
\end{equation}
where obviously $b_2+1=b_1$, and
$ \varphi$ is represented by a matrix with entries in $\frak{m}_p$. The Hilbert-Burch theorem \cite[20.4]{Eisenbud95} says that all the $b_2\times b_2$ minors of $\varphi$ generate the ideal $\is{\xi, p}$. A standard computation of homological algebra shows
\begin{lemma}\label{b_2}
With the notations above, $\dim_{k(p)}\soc{\sshf{\xi, p}}=b_2=e(\is{\xi, p})-1$.\qed
\end{lemma}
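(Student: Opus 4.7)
The plan is to verify the two equalities in turn. The second, $b_2 = e(\is{\xi, p}) - 1$, is immediate from the minimal resolution (\ref{minimal resolution}): setting $R = \sshf{X, p}$, the resolution exhibits a surjection $R^{b_1} \twoheadrightarrow \is{\xi, p}$ whose kernel is $\text{im}(\varphi) \subseteq \frak{m}_p R^{b_1}$, because $\varphi$ has entries in $\frak{m}_p$. By Nakayama's lemma this forces $b_1 = e(\is{\xi, p})$, and combining with $b_1 = b_2 + 1$ gives the claim.

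For the socle equality I plan to identify $\soc{\sshf{\xi,p}}$, via Matlis duality, with the top of the canonical module $\omega := \text{Ext}^2_R(\sshf{\xi, p}, R)$. Dualizing the deleted complex $0 \to R^{b_2} \xrightarrow{\varphi} R^{b_1} \to R \to 0$ against $R$ produces
\begin{equation*}
0 \to R \to R^{b_1} \xrightarrow{\varphi^T} R^{b_2} \to 0,
\end{equation*}
so $\omega = \text{coker}(\varphi^T)$. Since the entries of $\varphi^T$ still lie in $\frak{m}_p$, one reads off $\dim_{k(p)} \omega / \frak{m}_p \omega = b_2$.

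To conclude, I invoke Matlis duality. Because $R$ is a regular (hence Gorenstein) local ring of dimension $2$ and $\sshf{\xi, p}$ is an Artinian Cohen--Macaulay $R$-module of codimension $2$, $\omega$ is the canonical module of $\sshf{\xi, p}$, which for Artinian modules coincides with its Matlis dual. Since Matlis duality is exact and swaps socle and top, $\dim_{k(p)} \soc{\sshf{\xi, p}} = \dim_{k(p)} \omega/\frak{m}_p \omega = b_2$. No serious difficulty is expected here, the author himself calls the statement a ``standard computation''; the only real choice is which form of duality to invoke, and once $\omega$ is recognized as the Matlis dual of $\sshf{\xi, p}$, the socle dimension can be read directly off the dualized resolution.
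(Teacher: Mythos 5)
Your argument is correct. Note that the paper itself offers no proof of this lemma -- it is stated with a \qed as ``a standard computation of homological algebra'' -- so there is nothing to match step for step; what you have done is supply a legitimate version of that standard computation. Your second equality is exactly right: minimality of (\ref{minimal resolution}) puts $\ker\bigl(R^{b_1}\twoheadrightarrow\is{\xi,p}\bigr)=\mathrm{im}(\varphi)$ inside $\frak{m}_pR^{b_1}$, so Nakayama gives $e(\is{\xi,p})=b_1=b_2+1$. For the socle, your route through $\omega=\text{Ext}^2_R(\sshf{\xi,p},R)=\mathrm{coker}(\varphi^T)$ and Matlis/local duality (socle and top are interchanged, so $\dim\soc{\sshf{\xi,p}}=\dim\omega/\frak{m}_p\omega=b_2$ by minimality of $\varphi^T$) is sound; just phrase the dualized complex as a complex whose rightmost cohomology is $\text{Ext}^2$, rather than as a short exact sequence, since writing ``$\to R^{b_2}\to 0$ exact'' would force the cokernel to vanish. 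It is worth knowing the even shorter version, which is presumably what the author had in mind: over the two-dimensional regular local ring $R=\sshf{X,p}$, self-duality of the Koszul complex on a regular system of parameters gives $\soc{\sshf{\xi,p}}=\text{Hom}_R(k(p),\sshf{\xi,p})\iso\text{Tor}_2^R(k(p),\sshf{\xi,p})$, and computing the latter from the minimal resolution (\ref{minimal resolution}) -- all of whose maps die modulo $\frak{m}_p$ -- immediately yields $k(p)^{b_2}$. Your proof and this one are two faces of the same duality; yours generalizes more readily (it identifies the canonical module explicitly, which the paper later uses for $Z^n$), while the Koszul--Tor argument is the quickest path to the number $b_2$.
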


We recall that the \textit{incidence correspondence Hilbert scheme} $\Hilb{n, n-1}{X}$ is defined as
\begin{equation*}
 \{(\xi, \xi')\; |\; \xi'\; \text{is a closed subscheme of }\; \xi\}\subset\Hilb{n}{X}\times\Hilb{n-1}{X}
\end{equation*}
For such $(\xi, \xi')$, one has a unique point $p$ which fits into the exact sequence
\begin{equation*}
    \ses{\is{\xi}}{\is{\xi'}}{k(p)}.
\end{equation*}
So naturally associated to $\Hilb{n, n-1}{X}$, there are two morphisms
$$\xymatrix{
&\Hilb{n, n-1}{X}\ar[dr]^{\phi}\ar[dl]_{\psi} &\\
Z^{n}&  & \Hilb{n-1}{X}\times X}$$
by $\psi: (\xi, \xi')\mapsto (\xi, p)$ and $\phi: (\xi, \xi')\mapsto (\xi', p)$.

We collect a few facts about $\Hilb{n, n-1}{X}$ that will be needed in the next section.
\begin{proposition}[\cite{Tikhomirov92}, \cite{Cheah96}]\label{smoothness of nested Hilbert scheme}
$\Hilb{n, n-1}{X}$ is a smooth irreducible variety of dimension $2n$.
\end{proposition}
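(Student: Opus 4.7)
The plan is to analyze $\Hilb{n,n-1}{X}$ via the projection $\phi\colon \Hilb{n,n-1}{X}\to \Hilb{n-1}{X}\times X$. Over a point $(\xi',p)$ the fiber parametrizes ideals $\is{\xi}\subset\is{\xi'}$ of colength one with quotient $k(p)$; equivalently, hyperplanes in the $k(p)$-vector space $\is{\xi'}\otimes k(p)$. This points to a global identification $\Hilb{n,n-1}{X}\cong \mathbb{P}(\is{Z^{n-1}})$, the projectivization of the universal ideal sheaf of $Z^{n-1}\subset \Hilb{n-1}{X}\times X$. My first step would be to establish this identification by checking the universal property of the nested Hilbert functor.

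Next, for smoothness I would apply the Hilbert--Burch resolution (\ref{minimal resolution}) in families. Because $Z^{n-1}$ is flat over the smooth base $\Hilb{n-1}{X}$, near any $(\xi',p)$ there is a two-term locally free resolution
\[
0\to F_1\xrightarrow{\Phi} F_0\to \is{Z^{n-1}}\to 0
\]
with $F_0,F_1$ of ranks $b_2+1$ and $b_2$. Inside the projective bundle $\mathbb{P}(F_0)$, the subscheme $\mathbb{P}(\is{Z^{n-1}})$ is cut out by the $b_2$ linear relations $\Phi^{t}\mathbf{x}=0$. Placing $\Phi$ in Hilbert--Burch normal form and applying the Jacobian criterion directly to these equations would verify both smoothness at every point and the expected total dimension $2n$.

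For irreducibility: on the open locus where $p\notin\xi'$, the map $\phi$ is an isomorphism onto $(\Hilb{n-1}{X}\times X)\setminus Z^{n-1}$, smooth and irreducible of dimension $2n$. On the complement one has $p\in\xi'$, which forces $e(\is{\xi'})\ge 2$; stratifying by the value of $b_2$ and invoking the Ellingsrud--Lehn codimension estimate applied to $Z^{n-1}$, the stratum with $b_2=i\ge 1$ contributes dimension at most $(2n-2i)+i=2n-i<2n$. Thus the open locus is dense, yielding irreducibility once smoothness is in hand.

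The main obstacle I anticipate is the smoothness verification at points where $p\in\xi'$ and $b_2\ge 2$: there $\phi$ has positive-dimensional fibers and $\is{Z^{n-1}}$ fails to be locally free, so smoothness does not follow formally from the base being smooth and requires an explicit coordinate computation with the deformation of the Hilbert--Burch matrix. Everything else should reduce to functorial bookkeeping and standard facts about Hilbert schemes on smooth surfaces.
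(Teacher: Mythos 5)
First, a point of comparison: the paper does not prove this proposition at all --- it is imported from Tikhomirov and Cheah --- so there is no internal argument to measure yours against. Your strategy (identify $\Hilb{n, n-1}{X}$ with $\mathbb{P}\paren{\is{Z^{n-1}}}$, present it locally inside a projective bundle $\mathbb{P}(F_0)$ as the zero locus of the $b_2$ linear equations coming from a Hilbert--Burch presentation, and then check smoothness by the Jacobian criterion) is essentially the Ellingsrud--Str{\o}mme construction recorded as Proposition \ref{proj of dualizing sheaf}(2). Your irreducibility argument is sound in outline: the local complete intersection presentation forces every component to have dimension at least $2n$, the exceptional locus of $\phi$ has dimension at most $2n-i<2n$ on the stratum $b_2=i\ge 1$, so every component meets the irreducible open set $\paren{\Hilb{n-1}{X}\times X}\setminus Z^{n-1}$. (You should, however, verify that the Ellingsrud--Lehn codimension estimate is not itself proved using smoothness of $\Hilb{n,n-1}{X}$, to rule out circularity.)

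The genuine gap is precisely the step you defer as "the main obstacle," and it is not a coordinate formality --- it is the entire content of the theorem. At a point of $\mathbb{P}(F_0)$ lying over $(\xi', p)\in Z^{n-1}$, minimality of the resolution means every entry of $\Phi$ lies in the maximal ideal, so the equations $\Phi^{t}\mathbf{x}=0$ have identically vanishing differentials in the fiber directions; each of the $b_2$ differentials is of the form $\sum_j x_{0j}\, d\Phi_{ji}$, a covector on the base $\Hilb{n-1}{X}\times X$. Smoothness is therefore equivalent to the assertion that for every nonzero fiber vector $\mathbf{x}_0$ these $b_2$ covectors are linearly independent, which requires identifying $T_{\xi'}\Hilb{n-1}{X}\iso\text{Hom}(\is{\xi'}, \sshf{\xi'})$ with first-order perturbations of the Hilbert--Burch matrix and proving a surjectivity statement about how those perturbations pair with $\mathbf{x}_0$. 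Moreover, there is no "Hilbert--Burch normal form" to reduce to: $\varphi$ is only determined up to row and column operations and its entries can be arbitrary elements of $\frak{m}_p$, and one cannot degenerate to a monomial normal form and conclude smoothness at the original point, since smoothness only propagates from a special fiber to nearby ones, not the reverse. Until this tangent-space computation (or an alternative such as Cheah's direct calculation of the tangent space of the nested Hilbert functor) is supplied, the proof is incomplete at its central step.
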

\begin{proposition}[\cite{ES}]\label{proj of dualizing sheaf}\hfill
\begin{enumerate}
  \item  $\Hilb{n, n-1}{X}\iso\mathbb{P}(\omega_{Z^{n}})$, where $\omega_{Z^{n}}$ is the dualizing sheaf. In particular, $\Hilb{n, n-1}{X}$ is a resolution of singularities of $Z^{n}$.
  \item $\Hilb{n, n-1}{X}\iso\text{Bl}_{Z^{n-1}}(\Hilb{n-1}{X}\times X)\iso\mathbb{P}\paren{\is{Z^{n-1}\subseteq {\Hilb{n-1}{X}\times X}}}$.
  \item  For a closed point $(\xi, p)\in Z^n$, $\psi^{-1}((\xi, p))\iso\mathbb{P}^{b_2-1}_k$.
\end{enumerate}
\end{proposition}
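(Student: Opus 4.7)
The plan is to prove the three assertions in the order (2), (3), (1). Part (2) gives the most concrete geometric construction; (3) follows from a local analysis of fibers; and (1) is obtained by identifying the projectivization from (2) with $\mathbb{P}(\omega_{Z^n})$ via Grothendieck duality.

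For (2), I analyze $\phi:\Hilb{n,n-1}{X}\to\Hilb{n-1}{X}\times X$. Over the open set where $p\notin\xi'$, the subscheme $\xi=\xi'\sqcup\{p\}$ is uniquely determined, so $\phi$ restricts to an isomorphism there. Over $(\xi',p)\in Z^{n-1}$, a fiber element amounts to a codimension-$1$ sub-ideal $\is{\xi}\subset\is{\xi',p}$ containing $\frak{m}_p\is{\xi',p}$, equivalently a hyperplane in the $k(p)$-vector space $\is{\xi',p}\otimes k(p)$, i.e.\ a point of $\mathbb{P}(\is{\xi',p}\otimes k(p))$. To promote this set-theoretic match to an isomorphism of schemes $\Hilb{n,n-1}{X}\cong\mathbb{P}(\is{Z^{n-1}})$, I use the tautological inclusion of universal ideal sheaves on $\Hilb{n,n-1}{X}\times X$ to produce a canonical rank-$1$ quotient of $\phi^*\is{Z^{n-1}}$, and hence a morphism to $\mathbb{P}(\is{Z^{n-1}})$ via the universal property of the projectivization; an inverse is built from the tautological quotient on $\mathbb{P}(\is{Z^{n-1}})$. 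The further identification $\mathbb{P}(\is{Z^{n-1}})\cong\text{Bl}_{Z^{n-1}}(\Hilb{n-1}{X}\times X)$ is then standard, since $Z^{n-1}$ is a Cohen--Macaulay subscheme of codimension $2$ in a smooth ambient variety, which forces the symmetric algebra of $\is{Z^{n-1}}$ to coincide with its Rees algebra.

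For (3), a point of $\psi^{-1}((\xi,p))$ is a length-$(n-1)$ subscheme $\xi'\subset\xi$. Localizing the defining short exact sequence at $p$, the submodule $\is{\xi'}/\is{\xi}\subset\sshf{\xi,p}$ is a $1$-dimensional $k(p)$-subspace annihilated by $\frak{m}_p$, i.e.\ a line in $\soc{\sshf{\xi,p}}$. Every such line conversely defines a valid $\xi'$, giving $\psi^{-1}((\xi,p))\cong\mathbb{P}(\soc{\sshf{\xi,p}})\cong\mathbb{P}_k^{b_2-1}$ by Lemma~\ref{b_2}.

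For (1), Grothendieck duality for the finite flat morphism $\pi:Z^n\to\Hilb{n}{X}$ with smooth target gives $\omega_{Z^n/\Hilb{n}{X}}\cong\mathcal{H}om_{\sshf{\Hilb{n}{X}}}(\pi_*\sshf{Z^n},\sshf{\Hilb{n}{X}})$, whose restriction to the fiber over $\xi\in\Hilb{n}{X}$ is the canonical module $\omega_{\sshf{\xi,p}}=\mathrm{Hom}_k(\sshf{\xi,p},k)$ of the Artinian local ring. A standard Matlis-type calculation gives $\omega_{\sshf{\xi,p}}\otimes_{\sshf{\xi,p}} k(p)\cong\mathrm{Hom}_k(\soc{\sshf{\xi,p}},k)$, so the fiber of $\omega_{Z^n/\Hilb{n}{X}}$ at $(\xi,p)$ has dimension $b_2$, and $\mathbb{P}(\omega_{Z^n/\Hilb{n}{X}})$---equivalently $\mathbb{P}(\omega_{Z^n})$, since the two dualizing sheaves differ by the pullback of a line bundle from the smooth $\Hilb{n}{X}$---has $\mathbb{P}^{b_2-1}$ fibers over $Z^n$, matching (3). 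To obtain a canonical isomorphism I construct a morphism $\Hilb{n,n-1}{X}\to\mathbb{P}(\omega_{Z^n})$ from the tautological surjection of the universal family on $\Hilb{n,n-1}{X}\times X$ onto the line bundle supported on the universal section of the extra point, dualized into a rank-$1$ quotient of $\psi^*\omega_{Z^n}$ via relative duality, and invoke the universal property of the projectivization. I expect the main obstacle to be precisely this dualization step and the verification that the resulting morphism---bijective on closed points by comparison with (3)---is genuinely an isomorphism of schemes; smoothness of $\Hilb{n,n-1}{X}$ (Proposition~\ref{smoothness of nested Hilbert scheme}) together with compatibility with the $\mathbb{P}^{b_2-1}$-bundle structure over $Z^n$ should be enough. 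The resolution statement then follows immediately, since $\psi$ is proper and birational from the smooth $\Hilb{n,n-1}{X}$ onto $Z^n$.
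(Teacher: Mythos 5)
First, a point of orientation: the paper does not prove this proposition at all --- it is imported wholesale from Ellingsrud--Str\o mme \cite{ES} (just as Proposition \ref{smoothness of nested Hilbert scheme} is imported from Tikhomirov and Cheah), so there is no in-paper argument to compare against. Your reconstruction follows what is essentially the standard route of \cite{ES}: describe $\phi$ as a projectivization via the universal property applied to the invertible quotient $\is{\xi'}/\is{\xi}$, identify $\psi$-fibres with $\mathbb{P}\paren{\soc{\sshf{\xi, p}}}$, and pass to $\mathbb{P}(\omega_{Z^n})$ by duality. Parts (2) (the Proj description) and (3) are argued correctly in outline.

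There are, however, two genuine gaps. The first is your justification of $\mathbb{P}\paren{\is{Z^{n-1}}}\iso\text{Bl}_{Z^{n-1}}(\Hilb{n-1}{X}\times X)$: it is \emph{not} true that a codimension-two Cohen--Macaulay subscheme of a smooth variety automatically has ideal sheaf of linear type. The ideal $(x, y)^2\subset k[x, y]$ is perfect of codimension two, yet $\text{Proj}\,\text{Sym}$ acquires a two-dimensional component $\mathbb{P}^2$ over the origin while the blow-up has exceptional fibre $\mathbb{P}^1$. What is needed in addition is a bound on the local number of generators (the condition $\mu(\is{Z^{n-1}, \eta})\le \text{codim}$ at every point $\eta$, i.e.~$G_\infty$), which in the present situation follows from the Ellingsrud--Lehn estimate $\text{codim}(V^i)\ge 2i$ quoted in the introduction; alternatively, once $\mathbb{P}\paren{\is{Z^{n-1}}}$ is identified with the smooth irreducible $\Hilb{n, n-1}{X}$, it must coincide with the unique component dominating the base, which is the blow-up. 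The second gap is part (1): you explicitly defer the construction of the rank-one quotient of $\psi^*\omega_{Z^n}$ via relative duality and the verification that the resulting map is an isomorphism, and this dualization is precisely where the content of (1) lies --- bijectivity on closed points plus smoothness of the source does not by itself yield an isomorphism onto $\mathbb{P}(\omega_{Z^n})$, which need not be normal a priori. As written, (1) is a plausible plan rather than a proof; since the paper's main argument (Claim B and the Koszul resolution of $\mathbb{P}(\omega_{Z^n, \zeta})$ inside $\mathbb{P}^{b_2-1}_R$) leans directly on (1), this step should either be carried out or explicitly cited from \cite{ES}.
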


\section{Proofs}
We denote the discriminant divisor of the finite map $\pi: Z^n\rightarrow\Hilb{n}{X}$ by $B^n$, which is a prime divisor parameterizing all non-reduced subschemes of $X$ of length $n$.

\begin{proof}[Proof of Theorem \ref{theorem 1}]
In the proof of \cite[Thm 7.6]{Fogarty73}, Fogarty shows that there are precisely two prime Weil divisors on $Z^n$ dominating $B^n$, which are described as
\begin{eqnarray*}
  E^n_1 &=& \overline{\{(\xi, p)\; | \; \xi\;\text{has type}\; (2, 1, \cdots, 1),\text{and}\; l_p(\xi)=2 \}}, \\
  E^n_2 &=& \overline{\{(\xi, p)\; | \; \xi\; \text{non reduced}, l_p(\xi)=1 \}},
\end{eqnarray*}
with the reduced induced closed subscheme structure. Moreover he proves that neither $E^n_1$ nor $E^n_2$ is $\mathbb{Q}$-Cartier.

Let $r_i$ be the ramification index of $E^n_i$ over $B^n$, and $K(\ast)$ the field of functions of $\ast$. From the identity
\begin{equation*}
  r_1[K(E^n_1): K(B^n)]+r_2[K(E^n_2): K(B^n)]=n,
\end{equation*}
and $[K(E^n_1): K(B^n)]=1, [K(E^n_2): K(B^n)]=n-2$, we see that $r_1=2$ and $r_2=1$.

Then Riemann-Hurwitz formula for finite maps claims that
\begin{equation}\label{RH}
  K_{Z^n}+(2c-1)E^n_1+cE^n_2=\pi^*(K_{\Hilb{n}{X}}+cB^n),
\end{equation}
for all $c\in\mathbb{Q}$. Thereby we deduce that $K_{Z^n}$ cannot be $\mathbb{Q}$-Cartier by setting $c=\frac{1}{2}$ or $0$.

It thus remains to show the rational singularities of $Z^n$ and compute the Samuel multiplicity.
Let $R$ be the local ring $\sshf{\Hilb{n}{X}\times X, \zeta}$. Since $Z^n$ is a codimension two Cohen-Macaulay subscheme of $\Hilb{n}{X}\times X$, one has the minimal free resolution of $\sshf{Z^n, \zeta}$
\begin{equation}\label{minimal free res.}
    0\rightarrow{\overset{b_2}{\oplus}R}\rightarrow{\overset{b_1}\oplus R}\rightarrow{R}\rightarrow\sshf{Z^n, \zeta}\rightarrow 0.
\end{equation}

\textbf{Claim A}: $b_2$ in (\ref{minimal free res.}) equals $b_2(\sshf{\xi, p})$.
\begin{proof}[Proof of Claim A]
Let $\{\xi\}\times{X}$ denote the fibre of $\Hilb{n}{X}\times X$ over $\xi$. Tensoring (\ref{minimal free res.}) with $\sshf{\{\xi\}\times{X}, p}$ yields the complex
\begin{equation}\label{complex}
     0\rightarrow{\overset{b_2}{\oplus}\sshf{X, p}}\rightarrow{\overset{b_1}\oplus \sshf{X, p}}\rightarrow{\sshf{X, p}}\rightarrow\sshf{\xi, p}\rightarrow 0.
\end{equation}
The complex is exact. In fact, for $i=1, 2$,
\begin{eqnarray*}
  \text{Tor}^{R}_{i}(\sshf{Z^n, \zeta}, \sshf{\{\xi\}\times{X}, p}) &\iso & \text{Tor}^{R}_{i}(\sshf{Z^n, \zeta}, R\otimes_{\sshf{\Hilb{n}{X}, \xi}}k(\xi))\\
   &\iso &  \text{Tor}^{\sshf{\Hilb{n}{X}, \xi}}_{i}(\sshf{Z^n, \zeta}, k(\xi))\\
   &=& 0,
\end{eqnarray*}
where the last step is by the flatness of $\sshf{Z^n, \zeta}$ over $\sshf{\Hilb{n}{X}, \xi}$.

Clearly $(\ref{complex})$ is minimal, so it is a minimal free resolution of $\sshf{\xi, p}$, and we are done.
\end{proof}

By taking dual $\text{Hom}_R(\cdot , R)$ for (\ref{minimal free res.}), one gets the exact sequence
\begin{equation*}
   {\overset{b_1}\oplus R}\rightarrow{\overset{b_2}{\oplus}R}\rightarrow{\text{Ext}^2_R(\sshf{Z^n,\zeta}, R)}\rightarrow 0.
\end{equation*}
Since $R$ is regular, one has $R\iso\omega_R$, the canonical module. By duality, $\text{Ext}^2_R(\sshf{Z^n, \zeta}, R)\iso \omega_{Z^n, \zeta}$, and hence $\mathbb{P}(\omega_{Z^n, \zeta})\embedding{\mathbb{P}:=\mathbb{P}^{b_2-1}_{R}}$. Writing $\rho$ for the projection $\mathbb{P}\rightarrow\spec{R}$, one has the commutative diagram
$$\xymatrix{
\mathbb{P}(\omega_{Z^n, \zeta}) \ar[d]_{\rho}\ar@{^{(}->}[r] & \mathbb{P}\ar[d]_{\rho}\\
 \spec{\sshf{Z^n, \zeta}}\ar@{^{(}->}[r] & \spec{R}}$$

\textbf{Claim B}: $\mathbb{P}(\omega_{Z^n, \zeta})$ is a local complete intersection in $\mathbb{P}$.
\begin{proof}[Proof of Claim B]
We have the diagram
$$\xymatrix{
\overset{b_1}\oplus\sshf{\mathbb{P}}  \ar[dr]^{\sigma} \ar[r] & \overset{b_2}\oplus\sshf{\mathbb{P}}\ar[d]\\
 & \sshf{\mathbb{P}}(1)}$$
where $\sshf{\mathbb{P}}(1)$ is the tautological invertible sheaf and the column map comes from the Euler sequence.

$\mathbb{P}(\omega_{Z^n, \zeta})$ is defined scheme-theoretically by the vanishing of $\sigma$, and hence has $b_1$ defining equations. On the other hand,
$\dim\mathbb{P}(\omega_{Z^n, \zeta})=2n=\dim\mathbb{P}-b_1$, i.e.~$\text{codim}(\mathbb{P}(\omega_{Z^n, \zeta}), \mathbb{P})=b_1$ as required.
\end{proof}
In view of Claim B, the Koszul complex
\begin{equation}\label{Koszul complex}
  0\rightarrow\bigwedge^{b_1}(\overset{b_1}\oplus\sshf{\mathbb{P}}(-1)) \rightarrow\cdots\rightarrow\bigwedge^2(\overset{b_1}\oplus\sshf{\mathbb{P}}(-1))\rightarrow {\overset{b_1}\oplus\sshf{\mathbb{P}}(-1)}\rightarrow\sshf{\mathbb{P}}\rightarrow\sshf{\mathbb{P}(\omega_{Z^n, \zeta})}\rightarrow 0
\end{equation}
is exact.

Observing that for all $j>0$ and $b_2-1\ge i\ge 0$,
\begin{equation*}
 R^j\rho_*\paren{\bigwedge^i\overset{b_1}\oplus\sshf{\mathbb{P}}(-1)}=R^j\rho_*\paren{\overset{b_1\choose i}\oplus\sshf{\mathbb{P}}(-i)}=0,
\end{equation*}
thus by splitting sequence (\ref{Koszul complex}) into short ones, we obtain that for all $j>0$,
\begin{equation*}
    R^j\rho_*\sshf{\mathbb{P}(\omega_{Z^n, \zeta})}=0.
\end{equation*}
Consider the fibred product for all closed point $\zeta\in Z^n$, cf.~Prop. \ref{proj of dualizing sheaf},
$$\xymatrix{
\mathbb{P}(\omega_{Z^n, \zeta})\ar[r]\ar[d]^{\rho}& \Hilb{n, n-1}{X}\ar[d]_{\psi}\\
 \spec{\sshf{Z^n, \zeta}}\ar[r]^{\nu} & Z^n.}$$
Since $\nu$ is flat,
\begin{equation*}
  R^j\psi_*\sshf{\Hilb{n, n-1}{X}}\otimes_{\sshf{Z^n}}\sshf{Z^n, \zeta}=0,
\end{equation*}
by base change theorem. It then follows that $Z^n$ has rational singularities, as $\Hilb{n, n-1}{X}$ is smooth, Prop. \ref{smoothness of nested Hilbert scheme}.

As for multiplicity, we use its relation with the Segre class $s_t(\cdot)$ and the invariance of Segre class under a birational morphism (cf. \cite[Chap. 4]{Fulton}), which give
\begin{equation*}
  \mu=s_0\paren{\{\zeta\}, Z^n}=s_{b_2-1}\paren{\mathbb{P}^{b_2-1}, \Hilb{n, n-1}{X}}=s_{b_2-1}(\shf{N}),
\end{equation*}
where $\shf{N}:=\shf{N}_{\mathbb{P}^{b_2-1}/{\Hilb{n, n-1}{X}}}$. We note that $\shf{N}\iso\shf{N}_{\mathbb{P}^{b_2-1}/\mathbb{P}(\omega_{Z^n, \zeta})}$.

By the exact sequence
\begin{equation*}
    \ses{\shf{N}}{\shf{N}_{\mathbb{P}^{b_2-1}/\mathbb{P}}}{\shf{N}_{\mathbb{P}(\omega_{Z^n, \zeta})/{\mathbb{P}}}\big|_{\mathbb{P}^{b_2-1}}},
\end{equation*}
and observing $\shf{N}_{\mathbb{P}^{b_2-1}/\mathbb{P}}=\overset{2n+2}{\oplus}\sshf{\mathbb{P}^{b_2-1}}$ and $\shf{N}_{\mathbb{P}(\omega_{Z^n, \zeta})/{\mathbb{P}}}\big|_{\mathbb{P}^{b_2-1}}\iso \overset{b_1}{\oplus}\sshf{\mathbb{P}^{b_2-1}}(1)$, we get
\begin{equation*}
    \mu=s_{b_2-1}(\shf{N})=c_{b_2-1}\paren{\overset{b_1}{\oplus}\sshf{\mathbb{P}^{b_2-1}}(1)}={b_1\choose b_2-1}={b_2+1\choose 2},
\end{equation*}
where $c$ is the Chern class.
\end{proof}

\begin{remark}
As a smooth surface varies, the universal families over Hilbert schemes of $n$ points easily patch together. More precisely, let $f: X\rightarrow C$ be a smooth, projective morphism from a 3-fold $X$ to a smooth curve $C$ over $k$. By \cite[Thm. 2.9]{Fogarty68}, the relative Hilbert scheme of $n$ points $\Hilb{n}{X/C}$ is smooth over $C$. The composite morphism $h: Z^n_{X/C}\rightarrow\Hilb{n}{X/C}\rightarrow C$ from the universal family is a flat morphism, so one gets a flat family of algebraic varieties with non $\mathbb{Q}$-Gorenstein and rational singularities.
\end{remark}

\begin{proof}[Proof of Theorem \ref{bound}]
We can assume that $\supp{\xi}=\{p\}$. Let $(A, \frak{m})$ denote the local ring $\sshf{X, p}$ and $I$ the defining ideal of $\xi$. Since $A/I$ is Artinian local $k$-algebra, one has $A/I\iso {\hat{A}}/{\hat{I}}$ as $k$-algebras, where $\hat{A}$, the $\frak{m}$-adic completion of $A$, is isomorphic to $k[\![x, y]\!]$.

Let $(R, \frak{n})$ be the local ring $\sshf{\mathbb{A}^2, O}$ of $\mathbb{A}^2$ at the origin. For $r\gg0$, one has the surjection $R/{{\frak{n}}^r}\iso\hat{A}/{\hat{\frak{m}}^r}\rightarrow\hat{A}/{\hat{I}}\rightarrow 0$, so there exists an ideal $J\subset R$ such that $R/J\iso {\hat{A}}/{\hat{I}}$ as $k$-algebras. Therefore it suffices to consider the case that $X=\mathbb{A}^2$ and $\supp{\xi}=\{O\}$.

Fix a monomial order $>$ for monomials in $k[x, y]$ and let $in_{>}I$ be the initial ideal of $I$. By \cite[Thm. 15.17, Prop. 15.16]{Eisenbud95}, there exists a flat family of length $n$ subschemes of $X$ over a curve such that the central fibre corresponds to $k[x, y]/in_{>}I$, and the other fibres correspond to rings isomorphic to $k[x, y]/I$. Because $b_2$ is an upper semi-continuous function on the set of closed points on $Z^n$, it suffices to treat the case when $I$ is a monomial ideal.
\begin{figure}[ht]\label{staircase}
        \centering
        \begin{tikzpicture}[scale=0.45]
            \foreach \x/\y in {{1/4},{2/4}, {3/3}, {4/3}, {5/2}, {6/2}, {7/2}, {8/2}}
                \draw[gray] (\x, 0)--(\x,\y);
            \foreach \x/\y in {{1/5}, {1/4}, {3/3}, {5/2}, {9/1}}
                \draw[gray] (0, \y)--(\x,\y);
            \draw[black, thick] (0, 0)--(0, 6)--(1, 6)--(1, 4)--(3, 4)--(3, 3)--(5, 3)--(5, 2)--(9, 2)--(9, 0)--(0, 0);
            \foreach \x/\y in {{0/6}, {1/4}, {3/3}, {5/2}, {9/0}}
                \node[draw, circle, black, fill=black, minimum size=5pt, inner sep=0pt] at (\x, \y) {};
            \foreach \x/\y in {{1/6}, {3/4}, {5/3}, {9/2}}
                \node[draw, circle, gray, fill=gray, minimum size=5pt, inner sep=0pt] at (\x, \y) {};

        \end{tikzpicture}
        \caption{}
    \end{figure}

In the monomial case, as shown in Figure 1, a set of minimal generators of $I$ corresponds to the black nodes, $n$ the number of boxes, and $b_2$ the number of gray nodes. Then it becomes clear that
$\frac{(b_2+1)b_2}{2}\le n$ and $\left\lfloor\frac{\sqrt{1+8n}-1}{2}\right\rfloor$ is the optimal upper bound for $b_2$.
\end{proof}

\bibliography{OntheUniversalFamilyofHilbertSchemesofPointsonaSurface}
\bibliographystyle{amsplain}
\end{document}